\documentclass[12pt,a4paper]{article}

\textheight =24 cm \textwidth =15 cm \topmargin =-10mm

\usepackage{amsthm,amsmath,natbib,amssymb,amsfonts,bm}
\usepackage{graphicx}
\usepackage{placeins}

\def\hang{\hangindent\parindent}
\def\rf{\par\noindent\hang}

\newtheorem{theorem}{Theorem}
\newtheorem{lemma}{Lemma}

\newtheorem*{corollary}{Corollary}
\newtheorem*{theorem*}{Theorem}

\theoremstyle{definition}

\theoremstyle{remark}

\DeclareMathAlphabet{\mathpzc}{OT1}{pzc}{m}{it}

\overfullrule=0pt

\begin{document}

\baselineskip=20pt

\phantom{1}
\vspace{-2cm}
\begin{center}
 {\bf \Large On confidence intervals in regression that utilize uncertain prior information
 about a vector parameter}
\end{center}

\smallskip

\begin{center}
{\bf \large Paul Kabaila$^{\textstyle{^*}}$ and Dilshani Tissera}
\end{center}

\medskip

\noindent{\it Department of Mathematics and
Statistics, La Trobe University, Victoria 3086, \newline Australia}

\medskip
\noindent{\bf Abstract}
\medskip

Consider a linear regression model with $n$-dimensional response vector,
$p$-dimensional regression parameter $\boldsymbol{\beta}$ and
independent normally distributed errors.
Suppose that the parameter of interest is $\theta = \boldsymbol{a}^T \boldsymbol{\beta}$
where $\boldsymbol{a}$ is a specified vector.
Define the $s$-dimensional parameter vector $\boldsymbol{\tau}=\boldsymbol{C}^T \boldsymbol{\beta} - \boldsymbol{t}$
where $\boldsymbol{C}$ and $\boldsymbol{t}$ are specified.
Also suppose that we have uncertain prior information that $\boldsymbol{\tau}=\boldsymbol{0}$.
Part of our evaluation of a frequentist confidence interval for $\theta$ is the ratio
(expected length of this confidence interval)/(expected length of standard $1-\alpha$ confidence interval),
which we call the scaled expected length of this interval.
We say that a $1-\alpha$ confidence interval for $\theta$ utilizes this
uncertain prior information if (a) the scaled expected length of this interval is significantly
less than 1 when $\boldsymbol{\tau}=\boldsymbol{0}$, (b) the maximum value
of the scaled expected length is
not too large  and (c) this
confidence interval reverts to the standard $1-\alpha$ confidence interval
when the data happen to strongly contradict the prior information.
Let $\hat \Theta=\boldsymbol{a}^T \hat{\boldsymbol{\beta}}$ and
$\hat{\boldsymbol{\tau}} = \boldsymbol{C}^T \hat{\boldsymbol{\beta}} - \boldsymbol{t}$,
where
$\hat{\boldsymbol{\beta}}$ is the least squares estimator of $\boldsymbol{\beta}$.
We consider the particular case that that $E \big ( (\hat{\boldsymbol{\tau}} - \boldsymbol{\tau}) (\hat \Theta - \theta) \big ) = \boldsymbol{0}$,
so that $\hat \Theta$ and $\hat{\boldsymbol{\tau}}$ are independent.
We present a new $1-\alpha$ confidence interval for $\theta$ that utilizes the uncertain prior information
that $\boldsymbol{\tau}=\boldsymbol{0}$.
The following problem is used to
illustrate the application of this new confidence interval.
Consider a $2^3$ factorial experiment with 1 replicate.
Suppose that the parameter of interest $\theta$ is a specified linear combination of the main effects. Assume that the three-factor
interaction is zero. Also suppose that we have
uncertain prior information that
all of the two-factor interactions are zero. Our aim is to find a frequentist
0.95 confidence interval for $\theta$ that utilizes this uncertain
prior information.

\bigskip

\noindent {\sl Keywords:} Frequentist confidence interval; Prior information;
Linear regression.


\bigskip

\noindent $^*$ Corresponding author. Address: Department of
Mathematics and Statistics, La Trobe University, Victoria 3086,
Australia; Tel.: +61-3-9479-2594; fax: +61-3-9479-2466.
{\it E-mail address:} P.Kabaila@latrobe.edu.au.

\newpage

\baselineskip=20pt

\noindent {\bf 1. Introduction}

\medskip

Suppose that the parameter of interest $\theta$ is a scalar and that we have
uncertain prior information about the parameters of the model.
Hodges and Lehmann (1952), Bickel (1984) and Kempthorne (1983, 1987, 1988) show how such uncertain prior information
can be utilized in frequentist inference, mostly for point estimation of $\theta$.
A confidence interval for $\theta$ is said to be a $1-\alpha$ confidence
interval if it has infimum coverage probability $1-\alpha$.
We assess a $1-\alpha$ confidence
interval $J$ by its scaled expected length, defined to be the ratio (expected length of $J$)/(expected
length of the standard $1-\alpha$ confidence interval for $\theta$).
The first requirement of a
$1-\alpha$ confidence interval that utilizes the uncertain
prior information is that its scaled expected length is significantly less than 1 when the
prior information is correct (Kabaila, 2009).

We classify confidence intervals that satisfy this first requirement
into the following two groups. The first group consists of
$1-\alpha$ confidence intervals with scaled expected length
that is less than or equal to 1 for all parameter values, so that these dominate the
standard $1-\alpha$ confidence interval. An example of such a confidence interval is the Stein-type confidence interval
for the normal variance (see e.g. Maata and Casella, 1990 and Goutis and Casella, 1991). The second group consists of $1-\alpha$
confidence intervals that satisfy this first requirement, when dominance of the usual $1-\alpha$ confidence interval is not possible (the
scaled expected length must exceed 1 for some parameter values).
Some relevant admissibility results are provided by
Kabaila, Giri and Leeb (2010) and Kabaila (2011).
This second
group includes the confidence intervals described by Pratt (1961), Brown et al (1995)
and Puza and O'Neill (2006ab). This second group also includes
$1-\alpha$ confidence intervals that satisfy the additional requirements that (a) the maximum (over the parameter space)
of the scaled expected length is not too much larger than 1 and (b) the confidence interval reverts to
the standard $1-\alpha$ confidence interval when the data happen to strongly contradict the prior information.
Confidence intervals that utilize uncertain the prior information and satisfy these additional requirements have been proposed by Farchione and Kabaila (2008)
and Kabaila and Giri (2009ab) (cf Kabaila and Giri, 2013).

Consider the linear regression model $\boldsymbol{Y}=\boldsymbol{X} \boldsymbol{\beta} + \boldsymbol{\varepsilon}$ ,
where $\boldsymbol{Y}$ is a random $n$-vector of responses, $\boldsymbol{X}$ is a $n \times p$ matrix with linearly independent columns,
$\boldsymbol{\beta}$ is an unknown parameter $p$-vector and  $\varepsilon \sim N(\boldsymbol{0},
\sigma^2 \boldsymbol{I}_n)$, where $\sigma^2$ is an unknown positive parameter.
Suppose that the parameter of interest is $\theta = \boldsymbol{a}^T \boldsymbol{\beta}$,
where $\boldsymbol{a}$ is a given $p$-vector $(\boldsymbol{a} \neq \boldsymbol{0})$.
Let the $s$-dimensional parameter vector $\boldsymbol{\tau}$ be defined to be $\boldsymbol{C}^T \boldsymbol{\beta} - \boldsymbol{t}$
where $\boldsymbol{C}$ is a specified $p \times s$ matrix  $(s < p)$ with linearly independent columns and $\boldsymbol{t}$ is a specified $s$-vector. Suppose that $\boldsymbol{a}$ does not belong to the linear subspace
spanned by the columns of $\boldsymbol{C}$.
Also suppose that previous experience with similar data sets and/or expert opinion and scientific background suggests
that $\boldsymbol{\tau}=\boldsymbol{0}$. In other words, suppose that we have uncertain prior information that $\boldsymbol{\tau}=\boldsymbol{0}$.
Our aim is to find a frequentist $1-\alpha$ confidence interval
for $\theta$ that utilizes this prior information.
By ``utilizes this prior information'' we mean that (a) the scaled expected length of this interval is significantly
less than 1 when $\boldsymbol{\tau}=\boldsymbol{0}$, (b) the maximum value
of the scaled expected length is
not too large and (c) this
confidence interval reverts to the standard $1-\alpha$ confidence interval
when the data happen to strongly contradict the prior information.

Kabaila and Giri (2009a) have dealt with the case that $s=1$, so we consider the case that $s \ge 2$.
Let $\hat{\boldsymbol{\beta}}$ denote the least squares estimator of $\boldsymbol{\beta}$.
Also let
$\hat \Sigma^2 = (\boldsymbol{Y} - \boldsymbol{X} \hat{\boldsymbol{\beta}})^T (\boldsymbol{Y} - \boldsymbol{X} \hat{\boldsymbol{\beta}})/(n-p)$,
$\hat \Theta = \boldsymbol{a}^T \hat{\boldsymbol{\beta}}$ and
$\hat{\boldsymbol{\tau}} = \boldsymbol{C}^T \hat{\boldsymbol{\beta}} - \boldsymbol{t}$.
 We consider the particular case
that $E \big ( (\hat{\boldsymbol{\tau}} - \boldsymbol{\tau}) (\hat \Theta - \theta) \big ) = \boldsymbol{0}$.
An example of this particular case is the following. Consider a $2^3$ factorial experiment with 1 replicate.
For factorial experiments, it is a widely-held belief that the higher the order of interaction, the more likely it is to be negligible.
Indeed, fractional factorial designs are based on this belief. Assume that the third-order interaction is zero.
Also suppose that we have uncertain prior information that all of the second-order interactions are zero. In this case,
$n-p=1$ and $s=3$. If the parameter of interest $\theta$ is a linear combination of the main effects then
$E \big ( (\hat{\boldsymbol{\tau}} - \boldsymbol{\tau}) (\hat \Theta - \theta) \big ) = \boldsymbol{0}$.

In Section 2, we describe the new $1-\alpha$ confidence interval for $\theta$ that utilizes the uncertain prior information
that $\boldsymbol{\tau}=\boldsymbol{0}$.
Define $\boldsymbol{\gamma} = \big ( \text{Cov}(\hat{\boldsymbol{\tau}}) \big)^{-1/2} \, \boldsymbol{\tau}$.
The coverage probability and scaled expected length of this new confidence interval are even functions of $||\boldsymbol{\gamma}||
= \sqrt{\boldsymbol{\gamma}^T \boldsymbol{\gamma}}$.
In Section 3, we consider this $2^3$ factorial experiment,
when $1-\alpha=0.95$. Figure 2 presents graphs of the squared scaled expected length and the coverage probability
of this new confidence interval (as functions of $||\boldsymbol{\gamma}||$).
The infimum coverage probability is computed to be 0.95.
To an excellent approximation, the coverage probability of the new confidence interval is equal to 0.95, throughout the
parameter space. This figure demonstrates that this new confidence interval has excellent performance in terms of
squared scaled expected length.
When the prior information is correct (i.e. $||\boldsymbol{\gamma}||=0$), we gain since the square of the scaled expected length
is 0.34707, which is much smaller than 1. The maximum value of the square of the scaled expected length is
1.0404, which is
only slightly larger than 1.
The new 0.95 confidence interval for $\theta$ coincides with the standard $1-\alpha$ confidence interval
when the data strongly contradicts the prior information. This is reflected in Figure 2 by the fact that the
square of the scaled expected length approaches 1 as $||\boldsymbol{\gamma}|| \rightarrow \infty$.
In Section 4, we examine the effect on the performance of the new confidence interval of increasing $s$,
for $n-p=1$. The application of the new confidence interval is to the case that $n-p$ is small. As pointed
out in Section 2, the ability of the new confidence interval to utilize the uncertain prior information comes
from enhanced estimation of $\sigma^2$. The smaller $n-p$ is, the larger will be this enhancement.

\bigskip

\noindent {\bf 2. New confidence interval that utilizes the uncertain prior information}

\medskip

Our first step is to reduce the data to $\big( \hat \Theta, \hat{\boldsymbol{\tau}}, \hat \Sigma^2 \big)$.
Let $v_{11} = \boldsymbol{a}^T (\boldsymbol{X}^T \boldsymbol{X})^{-1} \boldsymbol{a}$ and
$\boldsymbol{V}_{22} = \boldsymbol{C}^T (\boldsymbol{X}^T \boldsymbol{X})^{-1} \boldsymbol{C}$.
Note that $\hat \Theta \sim N(\theta, \sigma^2 v_{11})$, $\hat{\boldsymbol{\tau}} \sim N(\boldsymbol{\tau},
\sigma^2 \boldsymbol{V}_{22})$ and  $\hat \Sigma^2$ are independent random vectors. Let $m=n-p$.
Define the quantile $t(m)$ by the requirement that
$P \big(-t(m) \le T \le t(m) \big) = 1-\alpha$ for $T \sim t_m$.
The standard $1-\alpha$ confidence interval for $\theta =\boldsymbol{a}^T \boldsymbol{\beta}$ is
\begin{equation*}
I = \big[ \hat \Theta  -  t(m)  \sqrt{v_{11}} \  \hat \Sigma ,  \, \hat \Theta  +  t(m)  \sqrt{v_{11}} \  \hat \Sigma \big].
\end{equation*}

The new confidence interval for $\theta$ that we will describe shortly  is centered at $\hat \Theta$.
The fact that $\hat \Theta$ and
$\hat{\boldsymbol{\tau}}$ are independent suggests that the uncertain prior information that $\boldsymbol{\tau}=\boldsymbol{0}$
should not influence the point estimation of $\theta$.
However, this uncertain prior information can be used to enhance the estimation of $\sigma^2$.
In the absence of any prior information about
$\boldsymbol{\tau}$, the standard estimator of $\sigma^2$ is $\hat \Sigma^2$
and $m \hat \Sigma^2 / \sigma^2 \sim \chi_m^2$. However, if it known that
$\boldsymbol{\tau}=\boldsymbol{0}$ then that standard estimator of $\sigma^2$ is
\begin{equation*}
\tilde{\Sigma}^2 = \frac{m \, \hat \Sigma^2 + \hat{\boldsymbol{\tau}}^T \, \boldsymbol{V}_{22}^{-1} \, \hat{\boldsymbol{\tau}}}
{m+s} = \hat \Sigma^2 \left ( \frac{m + s F}{m + s} \right ),
\end{equation*}
where $F= \big(\hat{\boldsymbol{\tau}}^T \, \boldsymbol{V}_{22}^{-1} \, \hat{\boldsymbol{\tau}}/s \big)/\hat{\Sigma}^2$
and $(m+s) \tilde \Sigma^2 / \sigma^2 \sim \chi_{m+s}^2$.
This suggests that the uncertain prior information that $\boldsymbol{\tau}=\boldsymbol{0}$ can be used to enhance
the estimation of $\sigma^2$
by using the appropriate function of $\hat \Sigma^2$ and $F$.

This motivates us to consider a new confidence interval for $\theta$ of the form
\begin{equation*}
J(d) = \Big [ \hat \Theta - \sqrt{v_{11}} \, \hat \Sigma \, d ( \sqrt{F} ), \,
\hat \Theta + \sqrt{v_{11}} \, \hat \Sigma \, d ( \sqrt{F} ) \Big ],
\end{equation*}
where the function $d: [0,\infty) \rightarrow (0,\infty)$ is required to satisfy the following restrictions.

\medskip

\noindent \textbf{Restriction 1} \ $d$ is a continuous function.

\medskip

\noindent \textbf{Restriction 2} \ $d(x) = t(m)$ for all $x \ge k$, where $k$ is a specified positive number.

\medskip

\noindent The first restriction implies that the endpoints of the confidence interval $J(d)$ are continuous
functions of the data. Note that $F$ is the usual F statistic for testing the null
hypothesis $H_0: \boldsymbol{\tau} = \boldsymbol{0}$ against the alternative hypothesis
$H_1: \boldsymbol{\tau} \ne \boldsymbol{0}$.
Thus the second restriction implies that this confidence interval reverts to the usual
$1-\alpha$ confidence interval $I$ when the data happen to strongly contradict the prior information that
$\boldsymbol{\tau}=\boldsymbol{0}$.

Part of the evaluation of the confidence interval $J(d)$ consists of comparing it with the
usual $1-\alpha$ confidence interval $I$ using the scaled expected length criterion
(expected length of this confidence interval) / (expected length of $I$).
Theorem 1, which is stated and proved in Appendix A, provides computationally-convenient expressions for the coverage probability and
scaled expected length of $J(d)$.
Define
$\boldsymbol{\gamma} = \big ( \text{Cov}(\hat{\boldsymbol{\tau}}) \big)^{-1/2} \, \boldsymbol{\tau} = (1/\sigma) \boldsymbol{V}_{22}^{-1/2} \boldsymbol{\tau}$.
According to this theorem, for given function $d$, both the
coverage probability and the scaled expected length of $J(d)$ are functions of
$||\boldsymbol{\gamma}||$. We denote this scaled expected length by
$e \big(||\boldsymbol{\gamma}||; d \big)$.
The numerical integration method used to evaluate this coverage probability is described in Appendix B.

Our aim is to find a function $d$ satisfying Restrictions 1 and 2 and such that (a) the minimum of
$P(\theta \in J(d))$ over $||\boldsymbol{\gamma}||$ is $1-\alpha$ and (b)
$e(0; d)$ is minimized subject to the restriction that $e \big(||\boldsymbol{\gamma}||; d \big) \le \ell$
for all $||\boldsymbol{\gamma}||$, where $\ell \ge 1$ is chosen
by the statistician prior to the analysis of the data. Theorem 2, which is stated and proved in Appendix C,
provides a computationally-convenient
expression for $e(0; d)$. We expect that, for small $m$, this constrained minimization
will lead to a $1-\alpha$ confidence interval for $\theta$ that has scaled expected length
$e \big(||\boldsymbol{\gamma}||; d \big)$ that
is substantially less 1 for $\boldsymbol{\gamma}=\boldsymbol{0}$.

We implement the coverage constraint $P(\theta \in J(d)) \ge 1-\alpha$ for all $||\boldsymbol{\gamma}|| \ge 0$
as follows. For any reasonable choice of the function $d$, $P(\theta \in J(d))$ converges to $1-\alpha$ as
$||\boldsymbol{\gamma}|| \rightarrow \infty$. The constraints implemented in the computations are that
$P(\theta \in J(d)) \ge 1-\alpha$ for every $||\boldsymbol{\gamma}||$ in a judiciously-chosen finite set of
values ${\cal G}$. That a given ${\cal G}$ is adequate to the task is judged by checking numerically, at the
completion of the computations, that the coverage probability constraint is satisfied for all $||\boldsymbol{\gamma}|| \ge 0$
(cf. Farchione and Kabaila, 2012).

For computational feasibility, we specify the following parametric form for the
function $d$.
Suppose that $x_1, \ldots, x_q$
satisfy $0 = x_1 < x_2 < \cdots < x_q = k$.
We fully specify the function $d$ by the vector $\big (d(x_1), \ldots, d(x_{q-1}) \big)$
as follows.
The value of $d(x)$ for any $x \in [0,k]$ is specified
by natural cubic spline interpolation for these given function values
and $d(x_q)=t(m)$ (without any endpoint conditions
on the first derivative of $d$). We call $x_1, x_2, \ldots x_q$ the knots.
Of course, the values of $k$, $\ell$ and knots $x_i$ need to be judiciously-chosen and
this will usually require some computational exploration.

\bigskip

\noindent {\bf 3. Application to the analysis of data from a
single-replicate $\boldsymbol{2^3}$ factorial experiment}

\medskip

Consider a $2^3$ factorial experiment carried out without replication. Let $Y$ denote the response and let
$x_1$, $x_2$ and $x_3$ denote the coded levels for each of the 3 factors, where the coded level
takes either the value $-1$ or 1. We assume the model
\begin{equation*}
Y=\beta_{0}+\beta_{1}x_{1} +\beta_{2}x_{2}+\beta_{3}x_{3}+\beta_{12}x_{1}x_{2}
+\beta_{13}x_{1}x_{3}+\beta_{23}x_{2}x_{3}+\beta_{123}x_{1}x_{2}x_{3}+ \varepsilon
\end{equation*}
where  $\beta_{0}$,
$\beta_{1}$, $\beta_{2}$, $\beta_{3}$, $\beta_{12}$, $\beta_{13}$,
$\beta_{23}$, $\beta_{123}$ are unknown parameters and
$\varepsilon \sim N(0, \sigma^2 )$, where $\sigma^2$ is an unknown
positive parameter.

For factorial experiments it is
commonly believed that higher order interactions are negligible
(see e.g. Mead (1988, p.368) and Hinkelman \& Kempthorne (1994, p.350)). Indeed, this type of
belief is the basis for the design of fractional factorial experiments.
Assume that $\beta_{123} = 0$. Also suppose that we have uncertain prior information that
$\beta_{12}$, $\beta_{13}$ and $\beta_{23}$ are all zero. Thus $n-p = 1$.
We consider the particular case that the parameter of interest
interest $\theta$ is a linear combination of the main effects i.e.
$\theta = a_1 \beta_1 + a_2 \beta_2 + a_3 \beta_3$. In this case,
$E \big ( (\hat{\boldsymbol{\tau}} - \boldsymbol{\tau}) (\hat \Theta - \theta) \big ) = \boldsymbol{0}$.

Of course, the properties of $J(d)$, resulting from the constrained minimization described in Section 2, depend on the
values of $k$, $\ell$, the knots $x_i$ and $1-\alpha$. We focus on the particular case that $k=15$, $\ell=1.02$, the knots
are at $0, 1, 2, 3, 7, 12, 15$ and $1-\alpha = 0.95$. When we compute the new confidence interval, we obtain the
function $d$ shown in Figure 1.
All of the computations presented in the present
paper were performed with programs written in MATLAB, using the optimization and
statistics toolboxes.
Consistent with the corollary stated in Appendix C,
$d(x)$ takes values larger than $t(m)$.
Figure 2 presents graphs of the squared scaled expected length and the coverage probability
(as functions of $||\boldsymbol{\gamma}||$) of this new confidence interval.
The squared scaled expected length and coverage probability computations were checked
using Monte Carlo simulations.

The infimum coverage probability is computed to be 0.95.
The upper panel of Figure 2 demonstrates that this new confidence interval has excellent performance in terms of
squared scaled expected length.
When the prior information is correct (i.e. $||\boldsymbol{\gamma}||=0$), we gain since the square of the scaled expected length
is 0.34707, which is much smaller than 1. The maximum value of the square of the scaled expected length is
1.0404, which is
only slightly larger than 1.
The new 0.95 confidence interval for $\theta$ coincides with the standard $1-\alpha$ confidence interval
when the data strongly contradicts the prior information. This is reflected in the upper panel of Figure 2 by the fact that the
square of the scaled expected length approaches 1 as $||\boldsymbol{\gamma}|| \rightarrow \infty$.

\begin{figure}[p]
\includegraphics{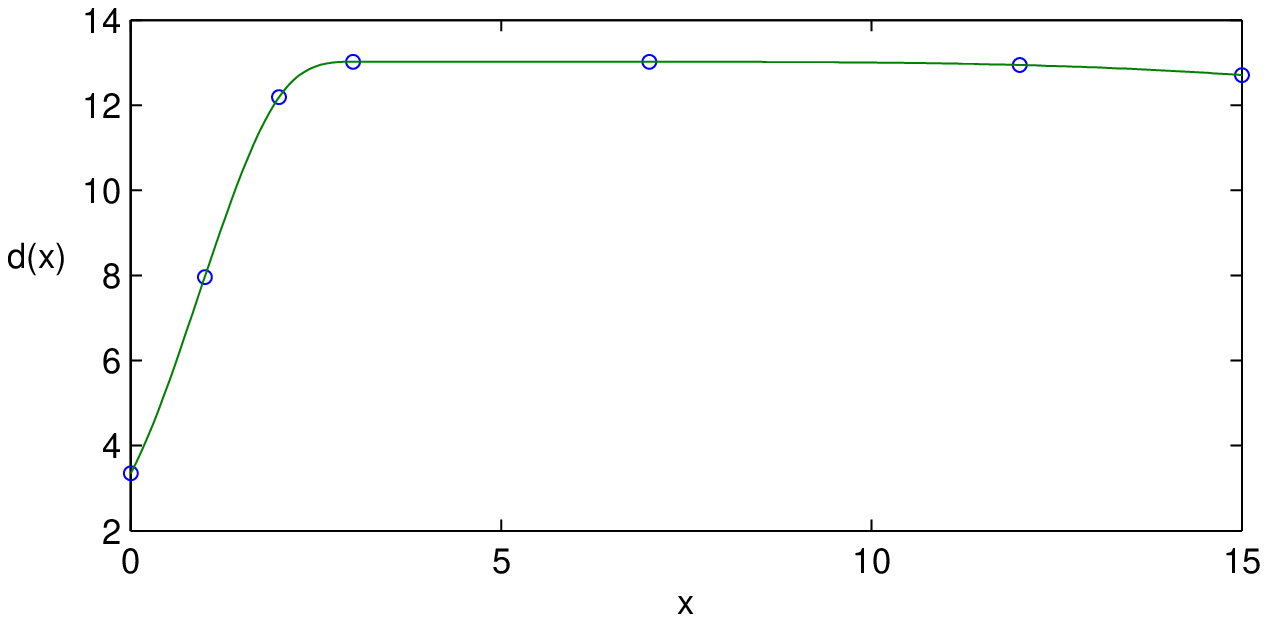}
\caption{\small{Plot of the function $d$ for the new 0.95 confidence interval for $\theta$ when $s=3$, $m=n-p=1$
and $\ell=1.02$. The knots are at 0, 1, 2, 3, 7, 12, 15.}}
\label{fig:spline}
\includegraphics{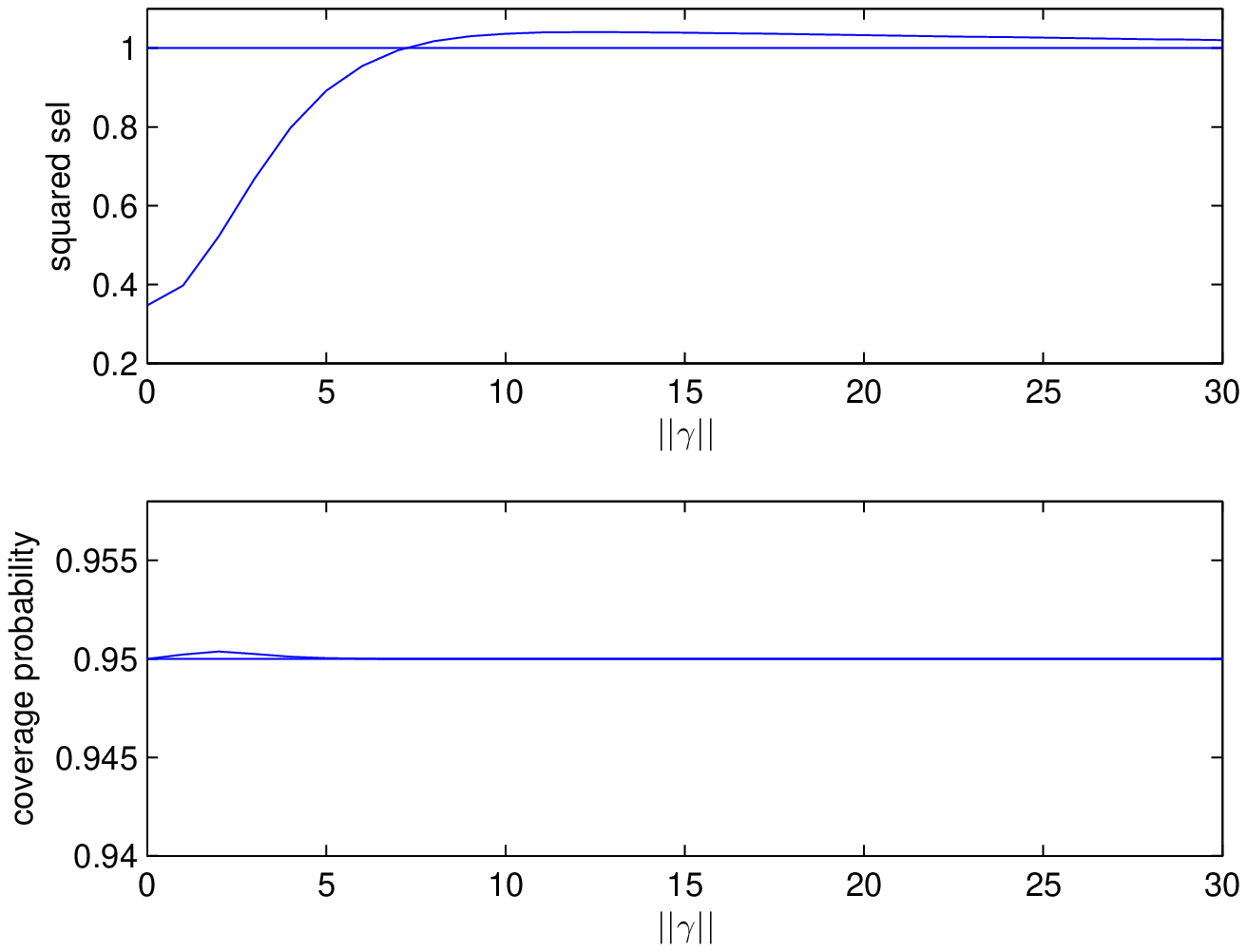}
\caption{\small{Plots of squared scaled expected length $e^2(||\gamma||;d)$ and coverage probability as function of $||\gamma||$ when $1-\alpha=0.95$, $s=3$, $m=n-p=1$ and $\ell=1.02$. }}
\label{fig:bothSelCov}
\end{figure}

\bigskip

\noindent {\bf 4. The effect on the performance of the new confidence interval of increasing the
value of $\boldsymbol{s}$}

\medskip

Suppose that the value of $m$ is fixed and that we increase $s$.
It seems plausible that the best possible
performance of the new confidence interval for $\theta$ will increase as $s$
increases i.e. as the amount of uncertain prior information increases.
We have examined the truth of this plausible result as follows. We have considered
$m=1$ and $1-\alpha=0.95$, chosen $\ell = 1.02$ and the number of knots to be 7.
For each $s=1, 2, 3, 5$ and 7, we have chosen $k$ and the knots so as to minimize
the scaled expected length at $||\boldsymbol{\gamma}||=0$.
We have obtained the following results:
\begin{table}[h]
\begin{center}
\begin{tabular}{|c|r|r|r|r|}\hline
s&Min sq sel&Max sq sel&Min CP&Max CP  \\\hline
    1&0.80549&1.0414&0.95&0.95049 \\\hline
    2&0.54698&1.0404&0.95&0.95030 \\\hline
    3&0.34707&1.0404&0.95&0.95037 \\\hline
    5&0.25151&1.0406&0.95&0.95034 \\\hline
    7&0.19027&1.0404&0.95&0.95030 \\\hline
\end{tabular}
\end{center}
\caption{Comparison of results for different values of $s$, for $m=1$, $1-\alpha=0.95$ and
$\ell =1.02$. The column labels Min sq sel,
Max sq sel, Min CP and Max CP
denote the minimum squared scaled expected length, the
maximum squared scaled expected length, the minimum coverage
probability and the maximum coverage probability, respectively.}
\label{comparison_diff_s}
\end{table}

\noindent If, for each $s$ considered, we assume that the performance of the confidence
interval in terms of the scaled expected length at $||\boldsymbol{\gamma}||=0$
is about as good as it can be then this table tells us the following. As $s$
increases, the amount of uncertain prior information increases and this leads to
an improvement in the performance of this confidence interval.

\bigskip

\noindent {\bf 5. Discussion}

\medskip

In this paper we have shown how to construct a frequentist confidence interval for the parameter of interest
$\theta$ that utilizes the uncertain prior information that $\boldsymbol{\tau} = \boldsymbol{0}$.
We have done this for the particular case that the covariances
between the components of the least squares estimator of $\boldsymbol{\tau}$ and the least squares estimator
of $\theta$ are all zero.
Our practical experience with the computations of this new confidence interval, in a variety of circumstances,
shows that the coverage probability needs to be computed with great accuracy for these computations to be
successful. If we no longer restrict attention to the particular case that all of these covariances are zero
then the construction of such a confidence interval necessitates
the computation of coverage probabilities using
more complicated methods of the type employed by Kabaila and Farchione (2012).
However, the increased computation time of these methods would appear
to make the constrained optimization not computationally practicable.

\bigskip

\noindent \textbf{Appendix A. Theorem 1 and its proof}

\medskip

In this appendix, we state and prove Theorem 1, which provides new computationally-convenient expressions
for the coverage probability and scaled expected length of the confidence interval $J(d)$. Define
$G=(\hat \Theta - \theta)/(\sigma \sqrt{v_{11}})$ and
$Q = (1/\sigma^2) \hat{\boldsymbol{\tau}}^T \boldsymbol{V}_{22}^{-1} \hat{\boldsymbol{\tau}}$.
Also define $V=\sqrt{Q/s}$ and
$W=\hat{\Sigma} /\sigma$. Now, $(G,V)$ and $W$ are independent random vectors.
The assumption that
$E \big ( (\hat{\boldsymbol{\tau}} - \boldsymbol{\tau}) (\hat \Theta - \theta) \big ) = \boldsymbol{0}$
implies that $G$ and $V$ are independent random variables. Thus, $G$, $V$ and $W$ are independent random
variables. Note that $G \sim N(0,1)$, $Q$ has a noncentral $\chi^2$ distribution with $s$ degrees of
freedom and noncentrality parameter $||\boldsymbol{\gamma}||^2= \boldsymbol{\gamma}^T \boldsymbol{\gamma}$ and $W$ has the same distribution as
$\sqrt{\chi_m^2/m}$.

\medskip

\begin{theorem}

Let $f_V( \,\cdot \,; ||\boldsymbol{\gamma}||)$ and $f_W$ denote the probability density functions of $V$ and $W$, respectively.
Also let $\Phi$ denote the $N(0,1)$ distribution function.

\begin{enumerate}

\item[(a)]

The coverage probability of $J(d)$
is equal
to
\begin{equation}
\label{cov_pr_final}
1 - \alpha + 2 \int_0^k \int_0^{\infty}
\big ( \Phi (w \, d (x) ) - \Phi (w \, t(m) )\big )
\, f_V(xw; ||\boldsymbol{\gamma}||) \,
w \, f_W(w) \, dw \, dx.
\end{equation}
For given function $d$, the
coverage probability of $J(d)$ is a function of
$||\boldsymbol{\gamma}||$.

\item[(b)]

The scaled expected length of $J(d)$
is equal
to
\begin{equation}
\label{sc_exp_len_final}
1 +
\frac{1}{t(m) E(W)} \int_0^{\infty} \int_0^k \big( d(x) - t(m) \big) \, f_V(x w; ||\boldsymbol{\gamma}||)
\, dx \, w^2 \, f_W(w) \, dw.
\end{equation}
For given function $d$, the
scaled expected length of $J(d)$ is a function of
$||\boldsymbol{\gamma}||$.

\end{enumerate}

\end{theorem}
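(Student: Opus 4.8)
\medskip

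The plan is to express both the event $\{\theta \in J(d)\}$ and the length of $J(d)$ in terms of the independent random variables $G$, $V$ and $W$, to integrate out $G$ by conditioning on $(V,W)$, and then to apply the change of variables $x = v/w$ together with an add-and-subtract device that isolates the standard quantities $1-\alpha$ and $1$. Throughout, note that Restrictions 1 and 2 make $d$ bounded, since a continuous function on $[0,\infty)$ that is eventually constant is bounded; hence every expectation below is finite and Fubini's theorem applies.

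For part (a), I would first divide the defining inequalities of $J(d)$ through by $\sigma\sqrt{v_{11}}$, which rewrites $\{\theta \in J(d)\}$ as $\{|G| \le W\,d(V/W)\}$, using $\hat\Sigma/\sigma = W$ and $\sqrt{F} = V/W$; the latter holds because $F = \big(\hat{\boldsymbol{\tau}}^T\boldsymbol{V}_{22}^{-1}\hat{\boldsymbol{\tau}}/s\big)/\hat\Sigma^2 = (Q/s)/W^2 = V^2/W^2$. Conditioning on $(V,W) = (v,w)$ and using that $G \sim N(0,1)$ is independent of $(V,W)$ gives conditional coverage $2\Phi(w\,d(v/w)) - 1$, so the coverage probability equals $2\int_0^{\infty}\int_0^{\infty}\Phi\big(w\,d(v/w)\big)\,f_V(v;||\boldsymbol{\gamma}||)\,f_W(w)\,dv\,dw - 1$. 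Writing $\Phi(w\,d(v/w)) = \big(\Phi(w\,d(v/w)) - \Phi(w\,t(m))\big) + \Phi(w\,t(m))$, the second term contributes $2\int_0^{\infty}\Phi(w\,t(m))\,f_W(w)\,dw - 1 = P(|G/W| \le t(m)) = 1-\alpha$, because $f_V(\,\cdot\,;||\boldsymbol{\gamma}||)$ integrates to one and $G/W \sim t_m$. In the remaining term I substitute $x = v/w$ with $w$ held fixed, so that $dv = w\,dx$, and then invoke Restriction 2 to see that the integrand vanishes for $x \ge k$; this produces the stated expression. Since the density $f_V(\,\cdot\,;||\boldsymbol{\gamma}||)$ of the scaled noncentral $\chi$ variable $V$ depends on $\boldsymbol{\gamma}$ only through $||\boldsymbol{\gamma}||$, the coverage probability is a function of $||\boldsymbol{\gamma}||$ alone.

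For part (b), the length of $J(d)$ equals $2\sqrt{v_{11}}\,\hat\Sigma\,d(\sqrt F) = 2\sigma\sqrt{v_{11}}\,W\,d(V/W)$ and the length of $I$ equals $2\sigma\sqrt{v_{11}}\,t(m)\,W$, so the scaled expected length is $E\big[W\,d(V/W)\big] / \big(t(m)\,E(W)\big)$. Decomposing $d(V/W) = \big(d(V/W) - t(m)\big) + t(m)$ splits this into $1 + E\big[W\big(d(V/W) - t(m)\big)\big] / \big(t(m)\,E(W)\big)$. Writing the numerator as a double integral against $f_V(\,\cdot\,;||\boldsymbol{\gamma}||)$ and $f_W$, performing the same substitution $x = v/w$ (which now yields a factor $w^2$, one power from the leading $W$ and one from the Jacobian), and again truncating the $x$-integral at $k$ by Restriction 2 gives the stated expression; the dependence on $||\boldsymbol{\gamma}||$ alone follows as in part (a).

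The computations are essentially routine. The only points requiring care are the boundedness of $d$, which is what justifies Fubini's theorem and the finiteness of $E\big[W\,d(V/W)\big]$, and the bookkeeping in the substitution $x = v/w$, where $w$ must be held fixed and the Jacobian factor $w$ tracked correctly so that the powers of $w$ and the upper limit $k$ in the two displayed formulas emerge exactly as stated. I expect this last bit of bookkeeping to be the only, and very mild, obstacle.
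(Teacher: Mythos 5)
Your proposal is correct and follows essentially the same route as the paper: both reduce the coverage event to $\{|G| \le W\,d(V/W)\}$, isolate $1-\alpha$ via $G/W \sim t_m$ (your add-and-subtract of $\Phi(w\,t(m))$ is equivalent to the paper's split on $\{V/W < k\}$ versus $\{V/W \ge k\}$, since the difference vanishes off $[0,k)$ by Restriction 2), and finish with the substitution $x = v/w$; part (b) likewise matches the paper's computation of $E\big(W\,d(V/W)\big)/\big(t(m)E(W)\big)$ with the indicator decomposition replaced by your equivalent $d(V/W) = \big(d(V/W)-t(m)\big) + t(m)$. No gaps.
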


\noindent \textbf{Proof of part (a).} \ It is straightforward to show that the coverage probability $P(\theta \in J(d))$ is equal to
$P \big( - W \, d (V/W) \le G \le W \, d (V/W) \big)$.
By the law of total probability, this is equal to
\begin{equation*}
P \big( - W \, d (V/W) \le G \le W \, d (V/W), V/W<k \big)  + P \big( - t(m) W \,  \le G \le t(m) W , \, V/W\ge k \big),
\end{equation*}
since $d(x)=t(m)$ for all $x \ge k$ . Now
\begin{align*}
&P \big( - t(m) W \,  \le G \le t(m) W , \, V/W\ge k \big) + P \big( - t(m) W \,  \le G \le t(m) W , \, V/W < k \big) \nonumber  \\
&=P \big( - t(m) W \,  \le G \le t(m) W  \, \big)   \\
&=P \big( - t(m) \,  \le G/W \le t(m)  \, \big) = 1- \alpha
\end{align*}
Thus $P(\theta \in J(d))$ is equal to
\begin{align*}
&1 - \alpha
+ P \big( - W \, d (V/W) \le G \le W \, d (V/W), V/W<k \big) \\
&\ \ \ \ \ \ \
- P \big( - t(m) W \le G \le t(m) W, V/W<k \big) \\
&=1 - \alpha + \int_0^{\infty} \int_0^{kw} \big ( 2 \Phi \left ( w d(v/w) \right )\,  - 1 \, \big ) f_V(v) dv \, \, f_W(w) dw \\
&\ \ \ \ \ \ \ \ \ \ \  - \int_0^{\infty} \int_0^{kw} \big ( 2 \Phi \left ( w  t(m) \right )\,  - 1 \, \big ) f_V(v) dv \, f_W(w) dw \nonumber \\
&=1 - \alpha +  2 \int_0^{\infty} \int_0^{kw} \big ( \Phi \left ( w d(v/w) \right )\,  - \Phi \left( w t(m)  \right ) \, \big ) f_V(v) dv \, \, f_W(w) dw.
\end{align*}
Changing the variable of integration of the inner integral to $x=v/w$, we obtain
\begin{align*}
&1 - \alpha +  2 \int_0^{\infty} \int_0^{k} \big ( \Phi \left ( w d(x) \right )\,  - \Phi \left( w t(m)  \right ) \, \big ) f_V(xw;||\gamma||)\, dx \, \, w f_W(w) dw \nonumber \\
&=1 - \alpha +  2 \int_0^{k} \int_0^{\infty} \big ( \Phi \left ( w d(x) \right )\,  - \Phi \left( w t(m)  \right ) \, \big ) f_V(xw;||\gamma||)  \, w f_W(w) dw \, \, dx
\end{align*}

\bigskip

\noindent \textbf{Proof of part (b).} \  It is straightforward to show that the scaled expected length of $J(d)$ is
equal to
\begin{equation}
\label{sc_exp_len_first}
\frac{E \big ( W \, d(V/W) \big )}{t(m) \, E(W)}.
\end{equation}
We use the notation
\begin{equation*}
{\cal I}({\cal A}) =
\begin{cases}
1 &\text{if } {\cal A} \ \ \text{is true} \\
0 &\text{if } {\cal A} \ \ \text{is false}
\end{cases}
\end{equation*}
where ${\cal A}$ is an arbitrary statement.
Since ${\cal I}(V/W < k)+{\cal I}(V/W \ge k) = 1$, $E \big ( W \, d(V/W) \big )$ is equal to
\begin{align*}
&E \big( \, W d(V/W)\, {\cal I}(V/W < k) \, \big ) +  E \big( \, W d(Q/W^2) \, {\cal I}(V/W \ge k) \, \big ) \\
&=  E \big( \, W d(V/W) \, {\cal I}(V/W < k) \, \big ) +  E \big( \, W t(m) \, {\cal I}(V/W \ge k) \, \big ) \\
&=  t(m) E(W) + E \big( \, (d(V/W) - t(m)) \, W \, {\cal I}(V/W < k) \, \big ).
\end{align*}
Thus the expression \eqref{sc_exp_len_first} for the scaled expected length is equal to
\begin{align}
\label{penult_sc_exp_len_1}
&1 + \frac{1}{t(m) E(W)} E \big( \, (d(V/W) - t(m)) \, W \, {\cal I}(V/W < k) \, \big ) \\
\label{penult_sc_exp_len_2}
&= 1 + \frac{1}{t(m) E(W)} \int_0^{\infty} \int_0^{k w} \left ( d \left ( \frac{v}{w} \right) - t(m) \right )
f_V(v; ||\gamma||) \, dv \, w \, f_W(w) \, dw.
\end{align}
Changing the variable of integration of the inner integral to $x = v/w$, we obtain
\eqref{sc_exp_len_final}.

\bigskip

\noindent \textbf{Appendix B. The method used to evaluate the coverage probability}

\medskip

In this appendix we describe the numerical integration method used to compute the coverage probability
$P(\theta \in J(d))$, as given by \eqref{cov_pr_final}.
The function $d$, which is a cubic spline in the interval
$[0,k]$, does not necessarily have a third derivative
at each of the knots $x_2, \dots, x_q=k$. So we evaluate
\eqref{cov_pr_final} by computing
\begin{equation}
\label{second_formula}
1-\alpha + 2 \sum_{i=1}^{q-1} \int_{x_i}^{x_{i+1}} \int_0^{\infty} \big( \Phi(w d(w)) - \Phi(w t(m)) \big) f_V(xw; ||\gamma||) \, w \, f_W(w) \, dw \, dx.
\end{equation}
Each of the inner integrals is equal to
\begin{align}
\label{inner_int}
&\int_0^{\infty} \big( \Phi(w d(w)) - \Phi(w t(m)) \big) f_V(xw; ||\gamma||) \, w \, f_W(w) \, dw
\notag \\
&= E \Big ( \big ( \Phi(W d(x))\,  - \, \Phi(W t(m))\big ) \, f_V(xW;||\gamma ||)\, W \Big ),
\end{align}
where $W$ has pdf $f_W$.
Let $Z=m W^2$, so that $Z \sim \chi^2_{m}$.
Thus \eqref{inner_int} is equal to
\begin{align}
\label{inner_int_final}
&E \left \{ \left ( \Phi \left(\sqrt{\frac{Z}{m}} \,  d(x) \right)\,  - \, \Phi \left(\sqrt{\frac{Z}{m}} \, t(m) \right)\right ) \, f_V \left(x\sqrt{\frac{Z}{m}} ;||\gamma || \right)\, \sqrt{\frac{Z}{m}}  \right \}
\notag \\
&=\frac{1}{\sqrt{m}} \int_0^{\infty} \bigg ( \Phi \left ( \sqrt{\frac{z}{m}} \, d(x) \right )\,  - \Phi \left( \sqrt{\frac{z}{m}} \, t(m)  \right )  \bigg ) f_V \left(x \sqrt{\frac{z}{m}};||\gamma || \right)\sqrt{z}\, f_m(z) \, dz
\end{align}
where $f_m$ denotes the $\chi^2_{m}$ pdf.
It can be shown that
\begin{equation*}
z^{1/2} f_m(z) = \frac{2^{1/2} \, \Gamma ((m+1)/2)}{\Gamma(m/2)} f_{m+1}(z).
\end{equation*}
Thus \eqref{inner_int_final} is equal to
\begin{equation}
\label{inner_int_tmp}
\sqrt{\frac{2}{m}} \frac {\Gamma ((m+1)/2)}{ \Gamma(m/2) } \int_0^{\infty} \Big ( \Phi \left ( \sqrt{\frac{z}{m}} d(x) \right )\,  - \Phi \left( \sqrt{\frac{z}{m}} t(m)  \right )  \Big ) f_V \left ( x \sqrt{\frac{z}{m}};||\gamma || \right ) f_{m+1}(z) dz.
\end{equation}
Assuming that $d(x) > 0$ for all $x \in [0,k]$, we compute this as follows.
Let $F_m$ denote the $\chi^2_{m}$ cdf. Now change the variable of integration to
$u=F_{m+1}(z)$, so that \eqref{inner_int_tmp} is equal to
\begin{equation*}
\sqrt{\frac{2}{m}} \, \frac{ \Gamma ((m+1)/2)}{ \Gamma(m/2)}  \int_0^{1} g \big(u;x,||\gamma|| \big) \, du
\end{equation*}
where $g(u;x,||\gamma||)$ is defined to be
\begin{equation}
\label{defn_g}
\left ( \Phi \left ( \sqrt{\frac{F_{m+1}^{-1}(u)}{m}} \, d(x) \right )\,  - \Phi \left( \sqrt{\frac{F_{m+1}^{-1}(u)}{m}} \, t(m)  \right )  \right ) f_V \left ( x \sqrt{\frac{F_{m+1}^{-1}(u)}{m}}; \, ||\gamma || \right )
\end{equation}
for all $(x,u) \in [0,k] \times [0,1)$ and the limit
of \eqref{defn_g} as $u$ approaches 1 from
below for $u=1$ and all $x \in [0,k]$. Thus
$g(1;x,||\boldsymbol{\gamma}||) = 0$ for all $x \in [0,k]$.
%
%
%
%

\bigskip

\noindent \textbf{Appendix C. Theorem 2 and its proof}

\medskip

The following theorem provides a computationally-convenient expression for
the criterion $e(0; d)$.

\begin{theorem}

The criterion $e(0; d)$ is equal to
\begin{equation}
\label{criterion}
1 + \frac{2^{3/2} \, s^{s/2} \, \Gamma((s+m+1)/2)}{t(m) E(W) \, \Gamma(m/2) \, \Gamma(s/2)}
\, \int_0^k \big ( d(x) - t(m) \big ) \, x^{s-1} \, \frac{m^{m/2}}{(s x^2 + m)^{(s+m+1)/2}} \, dx
\end{equation}
where $\Gamma$ denotes the gamma function.

\end{theorem}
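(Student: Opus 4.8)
The goal is to evaluate $e(0;d)$, the scaled expected length of $J(d)$ at $\|\boldsymbol{\gamma}\|=0$. The plan is to start from the general expression \eqref{sc_exp_len_final} in Theorem 1(b), specialize it to $\|\boldsymbol{\gamma}\|=0$, and then collapse the resulting double integral over $(x,w)$ into a single integral over $x$ by carrying out the $w$-integration in closed form. When $\|\boldsymbol{\gamma}\|=0$, the random variable $Q=\|\hat{\boldsymbol{\tau}}\|^2_{\boldsymbol{V}_{22}^{-1}}/\sigma^2$ has a central $\chi^2_s$ distribution, so $V=\sqrt{Q/s}$ has an explicit density: $f_V(v;0)$ is obtained from the $\chi^2_s$ density by the change of variables $q=sv^2$, giving $f_V(v;0)=\big(2 s^{s/2}/(2^{s/2}\Gamma(s/2))\big)\,v^{s-1}e^{-sv^2/2}$ up to the constant bookkeeping. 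Likewise $W$ has the density of $\sqrt{\chi^2_m/m}$, namely $f_W(w)\propto w^{m-1}e^{-mw^2/2}$, and $E(W)$ is the known constant $\sqrt{2/m}\,\Gamma((m+1)/2)/\Gamma(m/2)$.

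The key computational step: in \eqref{sc_exp_len_final} with $\|\boldsymbol{\gamma}\|=0$, the integrand is
$$\big(d(x)-t(m)\big)\, f_V(xw;0)\, w^2\, f_W(w),$$
and I would swap the order of integration so that the inner integral is over $w\in(0,\infty)$ for fixed $x$. Substituting the explicit Gaussian-type densities, the $w$-integral becomes, up to constants,
$$\int_0^\infty (xw)^{s-1} e^{-s x^2 w^2/2}\, w^2\, w^{m-1} e^{-m w^2/2}\, dw
= x^{s-1}\int_0^\infty w^{s+m} e^{-(s x^2+m)w^2/2}\, dw.$$
This is a standard Gamma integral: with $u=w^2$ it reduces to $\tfrac12\int_0^\infty u^{(s+m-1)/2} e^{-(sx^2+m)u/2}\,du = \tfrac12\,\Gamma\big((s+m+1)/2\big)\,\big(2/(sx^2+m)\big)^{(s+m+1)/2}$. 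Pulling this back outside, the $w$-dependence has been eliminated and what remains is a single integral over $x\in[0,k]$ of $\big(d(x)-t(m)\big)$ times $x^{s-1}(sx^2+m)^{-(s+m+1)/2}$, times a constant assembled from $\Gamma(s/2)$, $\Gamma(m/2)$, $\Gamma((s+m+1)/2)$, powers of $2$, $s^{s/2}$, $m^{m/2}$, and the factor $1/(t(m)E(W))$.

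The remaining work is purely bookkeeping of constants: I would collect the normalizing constant of $f_V(\cdot;0)$, the normalizing constant of $f_W$, the $2^{(s+m+1)/2}$ and $\Gamma((s+m+1)/2)$ from the Gamma integral, and the $m^{m/2}$ that I would keep inside the $x$-integral to match the stated form \eqref{criterion}, and verify they combine into the displayed prefactor $2^{3/2} s^{s/2}\Gamma((s+m+1)/2)/\big(t(m)E(W)\Gamma(m/2)\Gamma(s/2)\big)$. I anticipate no real obstacle here beyond care with the exponents of $2$ and $m$; the only mild subtlety is that $E(W)$ is left symbolic in the final formula rather than substituted, which is deliberate since it also appears in Theorem 1(b), so I would not expand it. The interchange of the two integrals is justified because the integrand is nonnegative once $d(x)\ge t(m)$—or, without that sign assumption, because $d$ is continuous hence bounded on the compact interval $[0,k]$ and the $w$-integral of the majorant $|d(x)-t(m)|\,f_V(xw;0)w^2 f_W(w)$ is finite, so Fubini–Tonelli applies.
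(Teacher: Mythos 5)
Your proposal is correct and follows essentially the same route as the paper: specialize \eqref{sc\_exp\_len\_final} to $\|\boldsymbol{\gamma}\|=0$, use $f_V(v;0)=2svf_s(sv^2)$, interchange the order of integration, and evaluate the resulting inner $w$-integral as a Gamma integral (the paper isolates this last step as Lemma 1 and cites Box and Tiao (1973), whereas you do the $u=w^2$ substitution directly). The constant bookkeeping you defer does work out to the stated prefactor $2^{3/2}s^{s/2}\Gamma((s+m+1)/2)/\bigl(t(m)E(W)\Gamma(m/2)\Gamma(s/2)\bigr)$.
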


\medskip

\begin{proof}

\noindent The proof of this theorem uses Theorem 1 (b).
It follows from \eqref{sc_exp_len_final} that
\begin{equation*}
e (0; d ) - 1
= \frac{1}{t(m) E(W)} \int_0^{\infty} \int_0^k \big( d(x) - t(m) \big) \, f_V(wx; 0)
\, dx \, w^2 \, f_W(w) \, dw
\end{equation*}
Note that $f_V(v;0) = 2 s v f_s(s v^2)$,
where $f_s$ denotes $\chi_s^2$ probability density function.
Interchanging the order of integration, we obtain
\begin{equation*}
e (0; d ) - 1
= \frac{2s}{t(m) E(W)}\int_0^k  \big( d(x) - t(m) \big) \, x \,  \int_0^{\infty}   \, f_s \big((sx^2) w^2 \big)
\,  w^3 \, f_W(w) \, dw \, dx .
\end{equation*}

\begin{lemma}

For each $y>0$,
\begin{equation}
\label{lemma1_result}
\int_0^{\infty} f_s(yw^2) \, w^3 \, f_W(w) \, dw =
\frac{2^{1/2} \, m^{m/2} \, y^{(s/2)-1} \, \Gamma((s+m+1)/2)}{(y+m)^{(s+m+1)/2} \, \Gamma(m/2) \, \Gamma(s/2)}.
\end{equation}

\end{lemma}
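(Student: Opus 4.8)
The plan is a direct substitution of the explicit densities followed by the evaluation of a single Gaussian-type integral. Recall that $W$ has the same distribution as $\sqrt{\chi_m^2/m}$, so applying the change-of-variables formula to $Z=mW^2 \sim \chi_m^2$ gives
\begin{equation*}
f_W(w) = \frac{2 \, (m/2)^{m/2}}{\Gamma(m/2)} \, w^{m-1} \, e^{-mw^2/2}, \qquad w>0,
\end{equation*}
while $f_s$, being the $\chi_s^2$ density, satisfies $f_s(yw^2) = \big(2^{s/2}\Gamma(s/2)\big)^{-1} (yw^2)^{(s/2)-1} e^{-yw^2/2}$. First I would multiply these together with the factor $w^3$, collect the powers of $w$ (which sum to $w^{s+m}$), collect the two Gaussian exponents into $e^{-(y+m)w^2/2}$, and pull the $y$-dependent factor $y^{(s/2)-1}$ and all numerical constants outside the integral. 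This reduces the left-hand side to a constant multiple of $\int_0^{\infty} w^{s+m} e^{-(y+m)w^2/2}\,dw$.

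Next I would evaluate that integral by the substitution $u=(y+m)w^2/2$, under which $w^{s+m}\,dw = 2^{(s+m-1)/2}(y+m)^{-(s+m+1)/2} u^{(s+m-1)/2}\,du$, so the integral equals $2^{(s+m-1)/2}(y+m)^{-(s+m+1)/2}\,\Gamma\big((s+m+1)/2\big)$ by the definition of the gamma function. Substituting this back and combining the constant prefactors yields
\begin{equation*}
\int_0^{\infty} f_s(yw^2)\, w^3\, f_W(w)\, dw
= \frac{2 \,(m/2)^{m/2}}{2^{s/2}\,\Gamma(s/2)\,\Gamma(m/2)} \cdot y^{(s/2)-1} \cdot \frac{2^{(s+m-1)/2}\,\Gamma\big((s+m+1)/2\big)}{(y+m)^{(s+m+1)/2}},
\end{equation*}
and it remains only to simplify the numerical factor. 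Writing $(m/2)^{m/2}=m^{m/2}2^{-m/2}$, the powers of $2$ collect as $2^{1}\cdot 2^{-m/2}\cdot 2^{-s/2}\cdot 2^{(s+m-1)/2}=2^{1/2}$, which produces exactly the claimed right-hand side.

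I do not expect any real obstacle here: the argument is entirely a routine manipulation of known densities and a standard gamma integral. The only point requiring care is the bookkeeping of the numerical constants — in particular checking that the various powers of $2$ and the arguments of the $\Gamma$ functions combine correctly — so I would carry out that step slowly. (One could alternatively shorten the computation by invoking the identity $z^{1/2}f_m(z)=2^{1/2}\,\Gamma((m+1)/2)\,\Gamma(m/2)^{-1}f_{m+1}(z)$ used in Appendix~B, but the direct substitution above is self-contained.)
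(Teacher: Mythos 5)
Your proposal is correct and follows essentially the same route as the paper: substitute the explicit $\chi^2$-based densities, collect the powers of $w$ and the Gaussian exponents, and evaluate $\int_0^\infty w^{s+m}e^{-(y+m)w^2/2}\,dw$ (the paper quotes this integral from Box and Tiao (1973, A2.1.3), whereas you compute it directly via the gamma function). All the constants, including the powers of $2$, check out.
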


\begin{proof}

Note that $f_W(w)=2mwf_m(mw^2)$,
where $f_m$ denotes the $\chi_m^2$ probability density function.
Substituting the expressions for $f_s$ and $f_W$, we obtain
\begin{equation*}
\int_0^{\infty} f_s(yw^2) \, w^3 \, f_W(w) \, dw =  \frac{m^{m/2}\, y^{(s/2)-1}} {2^{(s+m-2)/2} \,
 \Gamma(m/2)\, \Gamma(s/2)} \int_0^{\infty} e^{-(y+m)w^2/2} \, w^{s+m} \, dw.
\end{equation*}
By (A2.1.3) of Box and Tiao (1973), this is equal to the right-hand side of \eqref{lemma1_result}.

\end{proof}

\noindent It follows from this lemma that
\begin{equation*}
e (0; d ) - 1
= \frac{2^{3/2} \, s^{s/2} \, \Gamma((s+m+1)/2)}{t(m) E(W) \, \Gamma(m/2) \, \Gamma(s/2)} \int_0^k
\big( d(x) - t(m) \big) \, x^{s-1} \,  \frac{m^{m/2}}{(sx^2+m)^{(s+m+1)/2}} \,  dx.
\end{equation*}

\end{proof}

\bigskip

\noindent \textbf{Appendix D. Some simple results on confidence interval performance}

\medskip

In this appendix we consider the confidence interval
\begin{equation*}
J(d) = \Big [ \hat \Theta - \sqrt{v_{11}} \, \hat \Sigma \, d ( \sqrt{F} ), \,
\hat \Theta + \sqrt{v_{11}} \, \hat \Sigma \, d ( \sqrt{F} ) \Big ],
\end{equation*}
where $d: [0,\infty) \rightarrow (0,\infty)$. We make additional requirements of $d$, as needed.
We state some simple results about the performance of this confidence interval. The proofs of
these results are straightforward and are omitted, for the sake of brevity.

Theorems 3 and 4 concern the expected length of $J(d)$. Theorem 3 is used in the proof of Theorem 4.

\begin{theorem}

Suppose that $d_1(x) \ge d_2(x)$ for all $x \ge 0$. Then
\begin{equation*}
E \big(\text{length of } J(d_1) \big) \ge E \big(\text{length of } J(d_2) \big)
\ \ \text{for all }\ ||\gamma||.
\end{equation*}

\end{theorem}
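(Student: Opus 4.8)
The plan is to reduce the claim to a pointwise (realization-by-realization) inequality between the lengths of the two intervals and then integrate.

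First I would write down the length of $J(d)$ explicitly. From the definition of $J(d)$, its endpoints are $\hat\Theta \mp \sqrt{v_{11}}\,\hat\Sigma\,d(\sqrt F)$, so
\begin{equation*}
\text{length of } J(d) = 2\,\sqrt{v_{11}}\,\hat\Sigma\,d(\sqrt F),
\end{equation*}
a nonnegative random variable, since $v_{11}>0$, $\hat\Sigma>0$ almost surely, and $d$ takes values in $(0,\infty)$. The key structural observation is that the statistic $F=\big(\hat{\boldsymbol{\tau}}^T\boldsymbol{V}_{22}^{-1}\hat{\boldsymbol{\tau}}/s\big)/\hat\Sigma^2$, and hence $\sqrt F$, does not depend on the choice of $d$; so the lengths of $J(d_1)$ and $J(d_2)$ are built from the same random variable $\sqrt F$ and the same nonnegative multiplier $2\sqrt{v_{11}}\,\hat\Sigma$.

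Next I would invoke the hypothesis that $d_1(x)\ge d_2(x)$ for all $x\ge 0$. Evaluating this at the nonnegative random argument $x=\sqrt F$ gives $d_1(\sqrt F)\ge d_2(\sqrt F)$ almost surely, and multiplying through by the nonnegative factor $2\sqrt{v_{11}}\,\hat\Sigma$ preserves the inequality, so
\begin{equation*}
\text{length of } J(d_1) = 2\sqrt{v_{11}}\,\hat\Sigma\,d_1(\sqrt F) \ \ge\ 2\sqrt{v_{11}}\,\hat\Sigma\,d_2(\sqrt F) = \text{length of } J(d_2)
\end{equation*}
almost surely. Taking expectations and using the monotonicity of expectation yields $E\big(\text{length of }J(d_1)\big)\ge E\big(\text{length of }J(d_2)\big)$. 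This holds for every value of $||\gamma||$ (indeed for every value of the nuisance parameter $\sigma$ as well), because the pointwise inequality above holds for every realization of the data irrespective of the underlying parameter values; the dependence on $||\gamma||$ enters only through the distribution over which one then integrates.

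I do not anticipate any genuine obstacle here: the whole argument is monotonicity of an integral under a pointwise domination that is not disturbed by a common nonnegative random scaling. The only points worth a line of care are (i) that the common multiplier $2\sqrt{v_{11}}\,\hat\Sigma$ is nonnegative, so the inequality is not reversed, and (ii) that, in order for the conclusion to be more than a formal inequality in $[0,\infty]$, the expected lengths should be finite — which holds as soon as $d$ is, for instance, bounded (as it is under Restrictions~1 and~2, by continuity on $[0,k]$ together with $d\equiv t(m)$ beyond $k$) and $E(\hat\Sigma)<\infty$.
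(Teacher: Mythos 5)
Your proof is correct; the paper omits the proof of this theorem as ``straightforward,'' and your pointwise argument --- that the length equals $2\sqrt{v_{11}}\,\hat\Sigma\,d(\sqrt F)$ with $\sqrt F$ and the nonnegative multiplier common to both intervals, so $d_1\ge d_2$ gives almost-sure domination of lengths and hence domination in expectation --- is exactly the intended one. Your side remarks on nonnegativity of the multiplier and finiteness of the expected length under Restrictions 1 and 2 are appropriate and complete the argument.
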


\medskip

\begin{theorem}

Suppose that $d_1(x) \ge d_2(x)$ for all $x \ge 0$ and that there exists $\epsilon > 0$ and
an interval $[a,b]$ (where $0 \le a < b$) such that $d_1(x) > d_2(x) + \epsilon$ for all
$x \in [a,b]$. Then
\begin{equation*}
E \big(\text{length of } J(d_1) \big) > E \big(\text{length of } J(d_2) \big)
\ \ \text{for all }\ ||\gamma||.
\end{equation*}

\end{theorem}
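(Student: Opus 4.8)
The plan is to upgrade the weak inequality of Theorem 3 to a strict one by isolating the extra mass contributed by the interval $[a,b]$. First I would use the reduction from Appendix A: the length of $J(d)$ equals $2\sqrt{v_{11}}\,\sigma\, W\, d(V/W)$, where $G$, $V$ and $W$ are independent, $W>0$ almost surely, $V=\sqrt{Q/s}$ with $Q$ having a noncentral $\chi^2$ distribution with $s$ degrees of freedom and noncentrality parameter $||\gamma||^2$, and $\sqrt{v_{11}}\,\sigma>0$. Taking expectations and subtracting gives
\[
E\big(\text{length of }J(d_1)\big) - E\big(\text{length of }J(d_2)\big) = 2\sqrt{v_{11}}\,\sigma\, E\!\left( W\big[\,d_1(V/W) - d_2(V/W)\,\big] \right),
\]
and since $d_1 \ge d_2$ everywhere and $W>0$ the random variable inside the last expectation is nonnegative; this is exactly how Theorem 3 is obtained. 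It remains to show that this expectation is strictly positive for every $||\gamma||$.

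Next I would restrict attention to the event $\{V/W \in [a,b]\}$. On that event the hypothesis $d_1(x) > d_2(x) + \epsilon$ for all $x \in [a,b]$ gives $W\big[d_1(V/W) - d_2(V/W)\big] > \epsilon\, W$, so
\[
E\!\left( W\big[\,d_1(V/W) - d_2(V/W)\,\big] \right) \;\ge\; \epsilon\, E\!\left( W\,{\cal I}(V/W \in [a,b]) \right).
\]
The random variable $W\,{\cal I}(V/W\in[a,b])$ is nonnegative and is strictly positive exactly on $\{V/W\in[a,b]\}$ (using $W>0$ almost surely), so its expectation is strictly positive provided $P(V/W\in[a,b])>0$.

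The hard part will be verifying that $P(V/W\in[a,b])>0$ for every value of the noncentrality parameter, equivalently that $V/W=\sqrt{F}$ has support all of $(0,\infty)$ uniformly in $||\gamma||$. For this I would note that the noncentral $\chi^2_s$ density is strictly positive on $(0,\infty)$ for every noncentrality parameter, hence $V$ has a density strictly positive on $(0,\infty)$; the same holds for $W$, and $V$ and $W$ are independent, so $V/W$ has a density that is strictly positive on $(0,\infty)$. Because $0\le a<b$, the set $[a^2,b^2]$ has positive Lebesgue measure and nonempty intersection with $(0,\infty)$, so $P(V/W\in[a,b]) = P(F\in[a^2,b^2]) > 0$ for every $||\gamma||$. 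Chaining the two displays then yields $E\big(\text{length of }J(d_1)\big) > E\big(\text{length of }J(d_2)\big)$ for all $||\gamma||$, as required.
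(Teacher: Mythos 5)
Your proof is correct, and it is the natural argument the authors had in mind: the paper omits the proofs of Theorems 3--6 as ``straightforward,'' and your route---writing the expected length as $2\sqrt{v_{11}}\,\sigma\,E\big(W\,d(V/W)\big)$ exactly as in the Appendix A reduction, bounding the difference below by $\epsilon\,E\big(W\,{\cal I}(V/W\in[a,b])\big)$, and checking that $V/W=\sqrt{F}$ has a density strictly positive on $(0,\infty)$ for every noncentrality parameter---is precisely that straightforward argument, with the one genuinely substantive point (positivity of $P(V/W\in[a,b])$ uniformly in $\|\boldsymbol{\gamma}\|$) correctly identified and justified.
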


\medskip

Theorems 5 and 6 concern the coverage probability of $J(d)$. Theorem 5 is used in the proof of Theorem 6.

\begin{theorem}

Suppose that $d_1(x) \ge d_2(x)$ for all $x \ge 0$. Then
\begin{equation*}
P \big(\theta \in J(d_1) \big)
 \ge P \big(\theta \in J(d_2) \big)
\ \ \text{for all }\ ||\gamma||.
\end{equation*}

\end{theorem}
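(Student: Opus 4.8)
The plan is to bypass the integral formula in Theorem 1(a) entirely and instead argue from the conditional-coverage representation established in its proof, since Theorem 5 imposes no structure on $d$ beyond $d:[0,\infty)\rightarrow(0,\infty)$ (in particular, neither Restriction 2 nor the cubic-spline parametrisation). Recall from the proof of Theorem 1(a) that, with $G=(\hat\Theta-\theta)/(\sigma\sqrt{v_{11}})$, $V=\sqrt{Q/s}$ and $W=\hat\Sigma/\sigma$, one has $\sqrt{F}=V/W$ and
\[
P\big(\theta\in J(d)\big)=P\big(-W\,d(V/W)\le G\le W\,d(V/W)\big),
\]
where $G\sim N(0,1)$ is independent of $(V,W)$, $W>0$ almost surely, and the joint distribution of $(V,W)$ depends on the model parameters only through $\|\gamma\|$.

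First I would condition on $(V,W)$. Because $G\sim N(0,1)$ is independent of $(V,W)$ and $W\,d(V/W)>0$ (as $W>0$ and $d$ takes values in $(0,\infty)$), we get
\[
P\big(-W\,d(V/W)\le G\le W\,d(V/W)\,\big|\,V,W\big)=2\,\Phi\big(W\,d(V/W)\big)-1,
\]
and hence, on taking the expectation over $(V,W)$, $P(\theta\in J(d))=E\big[\,2\,\Phi\big(W\,d(V/W)\big)-1\,\big]$. Next I would invoke monotonicity: the map $u\mapsto 2\Phi(u)-1$ is increasing on $(0,\infty)$, so if $d_1(x)\ge d_2(x)$ for all $x\ge 0$ then, since $W>0$ almost surely, $W\,d_1(V/W)\ge W\,d_2(V/W)$ almost surely and therefore $2\Phi\big(W\,d_1(V/W)\big)-1\ge 2\Phi\big(W\,d_2(V/W)\big)-1$ almost surely. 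Taking expectations preserves the inequality, giving
\[
P\big(\theta\in J(d_1)\big)=E\big[2\Phi\big(W\,d_1(V/W)\big)-1\big]\ge E\big[2\Phi\big(W\,d_2(V/W)\big)-1\big]=P\big(\theta\in J(d_2)\big),
\]
and since the distribution of $(V,W)$ depends on the parameters only through $\|\gamma\|$, this holds for every value of $\|\gamma\|$.

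Frankly, there is no substantive obstacle: once the conditional-coverage representation from the proof of Theorem 1(a) is in hand, the result is an immediate consequence of the monotonicity of $\Phi$ together with the positivity of $W$. The only point that needs a little care is the choice of tool — the conditioning argument rather than the displayed integral formula for the coverage probability, since the latter is tied to Restriction 2 whereas Theorem 5 is stated for arbitrary positive $d$ — and noting that no continuity of $d$ is needed, only enough regularity for the expectation $E[\,2\Phi(W\,d(V/W))-1\,]$ to be defined, which is automatic here.
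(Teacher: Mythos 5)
Your proof is correct, and since the paper explicitly omits the proofs of the Appendix~D theorems as ``straightforward,'' your argument --- conditioning on $(V,W)$, using the independence of $G\sim N(0,1)$ from $(V,W)$ to write the coverage as $E\bigl[2\Phi\bigl(W\,d(V/W)\bigr)-1\bigr]$, and then invoking the monotonicity of $\Phi$ together with $W>0$ --- is exactly the intended one. The only point you gloss over slightly is that $d$ must be (Borel) measurable for $d(V/W)$ to be a random variable, but the paper makes the same implicit assumption.
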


\medskip

\begin{theorem}

Suppose that $d_1(x) \ge d_2(x)$ for all $x \ge 0$ and that there exists $\epsilon > 0$ and
an interval $[a,b]$ (where $0 \le a < b$) such that $d_1(x) > d_2(x) + \epsilon$ for all
$x \in [a,b]$. Then
\begin{equation*}
P \big(\theta \in J(d_1) \big)
 > P \big(\theta \in J(d_2) \big)
\ \ \text{for all }\ ||\gamma||.
\end{equation*}

\end{theorem}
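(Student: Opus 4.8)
The plan is to reduce the claim to a statement about the integrand appearing in the coverage probability. From the first step of the proof of Theorem 1(a) we have, for any $d:[0,\infty)\to(0,\infty)$,
\[
P(\theta\in J(d)) \;=\; P\big(-W\,d(V/W)\le G\le W\,d(V/W)\big)\;=\;2\,E\big(\Phi\big(W\,d(V/W)\big)\big)-1,
\]
where $G\sim N(0,1)$, $V$ and $W$ are the independent random variables defined in Appendix A, and the second equality follows by conditioning on $(V,W)$ and using $\Phi(-x)=1-\Phi(x)$ together with $W\,d(V/W)>0$ a.s.\ (here $W>0$ a.s.\ because $mW^2\sim\chi^2_m$). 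This identity does not invoke Restriction 2, so it is available in the present setting. Hence it suffices to show that $E\big(\Phi(W\,d_1(V/W))\big) > E\big(\Phi(W\,d_2(V/W))\big)$ for every $||\gamma||$.

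First I would record the pointwise inequality: since $d_1(x)\ge d_2(x)$ for all $x\ge0$ and $W>0$ a.s., we have $W\,d_1(V/W)\ge W\,d_2(V/W)$ a.s., and monotonicity of $\Phi$ gives $\Phi(W\,d_1(V/W))\ge\Phi(W\,d_2(V/W))$ a.s.\ (this is exactly Theorem 5). Next I would upgrade this to a strict inequality on a suitable event. On $\{V/W\in[a,b]\}$ the hypothesis gives $d_1(V/W)>d_2(V/W)+\epsilon$, hence $W\,d_1(V/W)>W\,d_2(V/W)+\epsilon W>W\,d_2(V/W)$ a.s.\ on that event (again using $W>0$), and strict monotonicity of $\Phi$ yields $\Phi(W\,d_1(V/W))>\Phi(W\,d_2(V/W))$ there. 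It therefore remains only to check that $P(V/W\in[a,b])>0$.

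For that I would argue that $V/W$ is a ratio of independent, absolutely continuous random variables each supported on $(0,\infty)$ --- indeed $V=\sqrt{Q/s}$ with $Q$ noncentral $\chi^2_s$ (noncentrality $||\gamma||^2$) and $W$ distributed as $\sqrt{\chi^2_m/m}$ --- so the joint density $f_V(v;||\gamma||)\,f_W(w)$ is strictly positive on $\{v>0,\,w>0\}$; since the set $\{(v,w):v>0,\,w>0,\,a<v/w<b\}$ has positive Lebesgue measure, $P(a<V/W<b)>0$, and a fortiori $P(V/W\in[a,b])>0$. Combining the three facts --- $\Phi(W\,d_1(V/W))-\Phi(W\,d_2(V/W))$ is nonnegative a.s., bounded by $1$ (hence integrable), and strictly positive on a set of positive probability --- gives $E(\Phi(W\,d_1(V/W)))>E(\Phi(W\,d_2(V/W)))$, and multiplying by $2$ completes the proof. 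The only step that is not entirely mechanical is the verification that $P(V/W\in[a,b])>0$ for every $0\le a<b$; everything else is monotonicity of $\Phi$, positivity of $W$, and the elementary fact that $E(X)>E(Y)$ whenever $X\ge Y$ a.s., $X-Y$ is integrable, and $X>Y$ on a set of positive probability.
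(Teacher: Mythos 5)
Your proof is correct. The paper omits the proofs of the Appendix~D theorems as ``straightforward,'' and your argument --- writing $P(\theta\in J(d))=2\,E\big(\Phi(W\,d(V/W))\big)-1$ via the independence of $G$ and $(V,W)$, then combining the pointwise monotonicity of $\Phi$ (Theorem~5) with the strict inequality on the event $\{V/W\in[a,b]\}$, which has positive probability because $f_V(\,\cdot\,;\|\gamma\|)$ and $f_W$ are strictly positive on $(0,\infty)$ --- is exactly the intended argument, with all the relevant details (positivity of $W$, integrability, $a=0$ allowed) checked.
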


\medskip

These theorems have the following three consequences.

\begin{corollary}

Suppose that $d$ is continuous. If $d(0) < t(m)$ and $d(x) \le t(m)$ for all $x > 0$ then
\begin{equation*}
P (\theta \in J(d) )
 < 1-\alpha
\ \ \text{for all }\ ||\gamma||.
\end{equation*}

\end{corollary}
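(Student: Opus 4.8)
The plan is to compare $J(d)$ with the standard interval $I$, exploiting the fact that $I$ arises as a special case of the family $J(\cdot)$. Observe that the constant function $d^*(x) = t(m)$ for all $x \ge 0$ is continuous and satisfies Restrictions 1 and 2, and that $J(d^*) = I$. Since $G/W \sim t_m$ regardless of the value of $\|\boldsymbol{\gamma}\|$, we have $P(\theta \in J(d^*)) = P(-t(m) \le G/W \le t(m)) = 1-\alpha$ for every $\|\boldsymbol{\gamma}\|$. Hence it suffices to prove that $P(\theta \in J(d)) < P(\theta \in J(d^*))$ for all $\|\boldsymbol{\gamma}\|$, and for this I would invoke Theorem 6 with $d_1 = d^*$ and $d_2 = d$.

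To apply Theorem 6 I need two things: first, that $d^*(x) \ge d(x)$ for all $x \ge 0$, which is immediate from the hypothesis $d(x) \le t(m)$ for all $x > 0$ together with $d(0) < t(m) \le t(m)$; second, that there exist $\epsilon > 0$ and an interval $[a,b]$ with $0 \le a < b$ on which $d^*$ strictly exceeds $d$ by at least $\epsilon$. For the latter I would use continuity of $d$ at $0$: since $d(0) < t(m)$, set $\epsilon = \tfrac{1}{2}\big(t(m) - d(0)\big) > 0$, and choose $b > 0$ small enough that $d(x) < d(0) + \epsilon = \tfrac{1}{2}\big(d(0) + t(m)\big)$ for all $x \in [0,b]$. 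Then, with $a = 0$, we have $d^*(x) = t(m) > d(x) + \epsilon$ for all $x \in [a,b]$, as required by the hypothesis of Theorem 6.

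Combining these, Theorem 6 yields $P(\theta \in J(d^*)) > P(\theta \in J(d))$ for all $\|\boldsymbol{\gamma}\|$, i.e. $P(\theta \in J(d)) < 1-\alpha$ for all $\|\boldsymbol{\gamma}\|$, which is the claimed conclusion. I do not anticipate any genuine obstacle here: the argument is a direct application of Theorem 6, and the only non-mechanical point is the elementary use of continuity of $d$ at the origin to produce the interval $[a,b]$ and the gap $\epsilon$ on which the constant function $t(m)$ strictly dominates $d$. One should also note in passing that the conclusion is consistent with the corollary's role in Section 3, where $d(x) > t(m)$ is needed precisely to avoid this coverage deficiency.
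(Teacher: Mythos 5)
Your proof is correct and follows exactly the route the paper intends: the corollary is presented as a consequence of Theorems 3--6, and the natural argument is to apply Theorem 6 with $d_1 \equiv t(m)$ (so that $J(d_1)=I$ has coverage exactly $1-\alpha$ for all $\|\boldsymbol{\gamma}\|$) and $d_2 = d$, using continuity of $d$ at $0$ to produce the interval $[a,b]$ and the gap $\epsilon$. The paper omits the proof, but your verification of the hypotheses of Theorem 6 is exactly what is needed.
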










\baselineskip=19pt

\bigskip

\noindent {\bf References}

\smallskip

\rf Bickel, P.J., 1984. Parametric robustness: small biases can be
worthwhile. Annals of Statistics 12, 864--879.



\rf Box, G.E.P., Tiao, G.C., 1973. Bayesian Inference in Statistical Analysis. Wiley, New York.

\rf Brown, L.D., Casella, G., Hwang, J.T.G., 1995. Optimal confidence sets, bioequivalence and the Limacon of Pascal.
Journal of the American Statistical Association 90, 880--889.


\rf Farchione, D., Kabaila, P., 2008. Confidence intervals for the normal mean utilizing prior information.
Statistics and Probability Letters 78, 1094--1100.

\rf Farchione, D., Kabaila, P., 2012. Confidence intervals in regression centred on the SCAD estimator.
Statistics and Probability Letters 82, 1953--1960.


\rf Goutis, C., Casella, G., 1991. Improved invariant confidence intervals for the normal variance.
Annals of Statistics 19, 2015--2031.

\rf Hinkelmann, K., Kempthorne, O., 1994. Design and Analysis of Experiments, revised edition.
John Wiley, New York.

\rf Hodges, J.L., Lehmann, E.L., 1952. The use of previous
experience in reaching statistical decisions. Annals of
Mathematical Statistics 23, 396--407.


\rf Kabaila P., 2009. The coverage properties of confidence regions after model
selection. International Statistical Review 77, 405--414.

\rf Kabaila P., 2011. Admissibility of the usual confidence interval for the normal
mean. Statistics and Probability Letters 81, 352--359.

\rf Kabaila, P., Farchione, D., 2012. The minimum coverage probability of confidence intervals in regression after a preliminary F test.
Journal of Statistical Planning and Inference 142, 956--964.

\rf Kabaila, P.,  Giri, K., 2009a. Confidence intervals in regression utilizing prior
information. Journal of Statistical Planning and Inference
139, 3419--3429.

\rf Kabaila, P., Giri, K., 2009b. Large-sample confidence intervals
for the treatment difference in a two-period crossover trial, utilizing
prior information.
Statistics and Probability Letters 79, 652--658.

\rf Kabaila, P., Giri, K., 2013. Simultaneous confidence interval for the population cell
means, for two-by-two factorial data, that utilize uncertain prior information.
To appear in Communications in Statistics - Theory and Methods.

\rf Kabaila, P.,  Giri, K., Leeb, H., 2010. Admissibility of the usual
confidence interval in linear regression. Electronic Journal of Statistics
4, 300--312.

\rf Kempthorne, P.J., 1983. Minimax-Bayes compromise estimators. In
1983 Business and Economic Statistics Proceedings of the
American Statistical Association, Washington DC, pp.568--573.

\rf Kempthorne, P.J., 1987.  Numerical specification of
discrete least favourable prior distributions.  SIAM
Journal on Scientific and Statistical Computing 8, 71--184.


\rf Kempthorne, P.J., 1988. Controlling risks under different loss
functions: the compromise decision problem. Annals of
Statistics 16, 1594--1608.






\rf Maatta, J.M., Casella, G., 1990. Decision-theoretic estimation. Statistical
Science 5, 90--120.

\rf Mead, R., 1988. The Design of Experiments.
Cambridge University Press, Cambridge.

\rf Pratt, J.W., 1961. Length of confidence intervals. Journal of
the American Statistical Association 56, 549--657.

\rf Puza, B., O'Neill, T., 2006a. Generalised Clopper-Pearson confidence intervals for the binomial proportion.
Journal of Statistical Computation and Simulation 76, 489--508.

\rf Puza, B., O'Neill, T., 2006b. Interval estimation via tail functions.
Canadian Journal of Statistics 34, 299--310.




\end{document}